\newcommand{\argmax}{\mathop{\rm{argmax}\mathstrut}}
\newcommand{\cl}{\mathop{\mathrm{cl}\mathstrut}\nolimits}
\newcommand{\ri}{\mathop{\mathrm{ri}\mathstrut}\nolimits}
\newcommand{\conv}{\mathop{\mathrm{conv}\mathstrut}\nolimits}
\newcommand{\supp}{\mathop{\mathrm{supp}\mathstrut}\nolimits}
\newcommand{\Real}{\mathbb{R}\mathstrut}
\newcommand{\RealP}{\mathbb{R}\mathstrut_{+}}
\theoremstyle{plain}
\newtheorem{Thm}{Theorem}[section]
\newtheorem{Lem}[Thm]{Lemma}
\theoremstyle{definition}
\theoremstyle{remark}
\begin{document}

\begin{frontmatter}
\title{Uniqueness of the maximum likelihood estimator for $k$-monotone densities} 
\runtitle{Uniqueness of the $k$-monotone MLE}

\begin{aug}
    \author{\fnms{Arseni} \snm{Seregin}\thanksref{t1}\ead[label=e1]{arseni@stat.washington.edu}} 
	
	\thankstext{t1}{Research supported in part by NSF grant DMS-0804587}
	\runauthor{Arseni Seregin}


	\address{Department of Statistics, Box 354322\\University of Washington\\Seattle, WA  98195-4322\\
	\printead{e1}}
\end{aug}

\begin{abstract}
We prove uniqueness of the maximum likelihood estimator for the class of $k-$monotone densities.
\end{abstract}

\begin{keyword}[class=AMS]
\kwd[Primary ]{62G07}
\end{keyword}

\begin{keyword}
\kwd{uniqueness}
\kwd{$k$--monotone density}
\kwd{mixture models}
\kwd{density estimation}
\kwd{maximum likelihood}
\kwd{nonparametric estimation}
\kwd{shape constraints}
\end{keyword}

\end{frontmatter}

\section{Introduction}
The family $\mathcal{M}_{k}$ of  $k$-monotone densities on $\RealP$ is a mixture model defined by the basic density $g_{0}(x)\equiv k(1-x)_{+}^{k-1}$. That means that every density $f\in\mathcal{M}_{k}$ is a multiplicative convolution of some mixing probability distribution $G$ and $g_{0}$:
$$
f(x) = \int_{0}^{+\infty} \frac{1}{y}\ g_{0}\left(\frac{x}{y}\right)dG(y).
$$
This class generalizes classes of monotone ($1-$monotone) and convex decreasing (2-monotone) densities on $\RealP$. As $k$ increases, the classes $\mathcal{M}_{k}$ decrease and the class of completely monotone densities lies in the intersection of all $\mathcal{M}_{k}$. \cite{BW-2009} proves consistency and \cite{MR2382657} gives asymptotic limit theory for the MLE of a $k-$monotone density. For $k=1$ the uniqueness of the MLE follows from a well known characterization of the Grenander estimator, for $k=2$ the uniqueness was proved in \cite{MR1891742} and for $k=3$ in \cite{Balabda-2004}. In \cite{MR653523} the uniqueness and weak consistency was proved for the MLE of a completely monotone density. In \cite{MR1234757} a general method was developed which allows to prove uniqueness of the MLE in mixture models defined by strictly totally positive kernels such as the exponential kernel for completely monotone densities. However, for $k-$monotone densities the kernel is not strictly totally positive as follows from \cite{MR0053177} and uniqueness of the MLE for $k>3$ has not been investigated before. Below, we answer this question proving the following theorem:
\begin{Thm}
\label{thm-unique}
Let $k\ge 2$ be a positive integer and $X_{1},\dots,X_{n}$ be i.i.d. random variables with $k$-monotone density $f_{0}\in\mathcal{M}_{k}$. With probability $1$ the MLE $\hat f_{n}$ is uniquely defined by some discrete mixing probability measure $\hat G_{n}$. The cardinality of the support set $\supp(\hat G_{n})$ is less or equal than $n$. 
If $\supp(\hat G_{n})$ consists of points $Y_{1}<\dots<Y_{m}$, $m<n$ then there exists a subset of order statistics $X_{(i_{1})}< \dots <X_{(i_{m})}$ such that:
$$
X_{(i_{j})} < Y_{j} < X_{(i_{j+k})},
$$
where we assume $X_{(i_{l})}=+\infty$ for $l>m$. Also we have $Y_{m}>X_{(n)}$.
\end{Thm}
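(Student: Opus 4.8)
The plan is to combine the standard convex-duality characterization of a mixture MLE with a spline zero-counting argument tailored to the truncated-power kernel
$$
K(x,y) \;=\; \tfrac{1}{y}\,g_{0}(x/y) \;=\; k\,(y-x)_{+}^{k-1}\,y^{-k},
$$
since, as the introduction notes, $K$ fails to be strictly totally positive and the general method of \cite{MR1234757} does not apply.

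\emph{Existence, discreteness and the bound $Y_{m}>X_{(n)}$.} First I would record that $\mathcal{M}_{k}$ is convex and $G\mapsto \frac1n\sum_{i}\log f_{G}(X_{i})$ is concave, so a maximizer is determined by its action on the finite vector $(f(X_{1}),\dots,f(X_{n}))$. A coercivity argument (mass escaping to $0$ or $+\infty$ drives some $\log f(X_{i})\to-\infty$) gives existence with mixing measure supported on a bounded interval, and Carath\'eodory's theorem applied to the $n$ attainable coordinates yields a representation by a discrete $\hat G_{n}$. Since $f_{\hat G_{n}}(X_{(n)})>0$ forces $\hat G_{n}$ to put mass on $(X_{(n)},+\infty)$, we immediately obtain $Y_{m}>X_{(n)}$.

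\emph{Characterization.} Next I would derive the Fenchel/Kuhn--Tucker inequality: with $D(y):=\frac1n\sum_{i=1}^{n} K(X_{i},y)/\hat f_{n}(X_{i})$, the estimator is the MLE iff $D(y)\le 1$ for all $y>0$ with equality on $\supp(\hat G_{n})$, the Lagrange multiplier being pinned to $1$ by integrating $D$ against $\hat G_{n}$.

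\emph{Zero counting and interlacing.} Set $\phi(y):=y^{k}\bigl(D(y)-1\bigr)=\frac{k}{n}\sum_{i}(y-X_{i})_{+}^{k-1}/\hat f_{n}(X_{i})-y^{k}$, a spline of degree $k$ that is $C^{k-2}$ with simple knots at the order statistics and leading term $-y^{k}$. By the characterization $\phi\le 0$ and $\{\phi=0\}=\{D=1\}\supseteq\supp(\hat G_{n})$, so every zero of $\phi$ is a local maximum of even order. Differentiating $k-1$ times collapses $\phi$ to the piecewise-linear function $\phi^{(k-1)}(y)=\frac{k!}{n}\sum_{i:X_{i}<y}\hat f_{n}(X_{i})^{-1}-k!\,y$, which has slope $-k!$ between knots and exactly $n$ upward jumps; counting its sign changes and integrating back up $k-1$ times via Rolle's theorem bounds the number of zeros of $\phi$, giving $m\le n$, and produces the interlacing $X_{(i_{j})}<Y_{j}<X_{(i_{j+k})}$, where the index shift $k$ reflects the polynomial degree of $\phi$ on each knot interval.

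\emph{Uniqueness (the hard step).} If $\hat f_{1}$ and $\hat f_{2}$ are both MLEs, strict concavity of $\log$ along the segment $(1-t)\hat f_{1}+t\hat f_{2}$ forces $\hat f_{1}(X_{i})=\hat f_{2}(X_{i})$ for every $i$. Hence $D$ and its finite zero set $Z=\{D=1\}$ are common to both, and $\supp(\hat G_{1})\cup\supp(\hat G_{2})\subseteq Z$; writing $Z=\{y_{1},\dots,y_{p}\}$, the signed weight vector $w=\hat G_{1}-\hat G_{2}$ lies in the kernel of the collocation matrix $\bigl[K(X_{i},y_{l})\bigr]_{i,l}$. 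The crux is to prove this matrix has full column rank, i.e.\ that the truncated powers $(y_{l}-\cdot)_{+}^{k-1}$ are linearly independent over the sample; I would establish this from the Step-3 interlacing together with the almost-sure distinctness and genericity of the $X_{i}$, verifying a Schoenberg--Whitney-type condition that guarantees nonsingularity despite the zeros of the kernel. Full column rank forces $w=0$, whence $\hat G_{1}=\hat G_{2}$ and the MLE is unique. I expect this linear-independence/rank step to be the main obstacle, precisely because $K$ is only totally positive rather than strictly so, so the interlacing from the spline count must do the work that strict total positivity would otherwise supply.
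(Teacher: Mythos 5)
Your steps 1--2 and the bound $m\le n$ agree with the paper (its Lemma 2.2 works with $p=-\phi$, obtained from a supporting hyperplane of $\conv(\Gamma)$ rather than a KKT inequality, and counts zeros of the piecewise quadratic $p^{(k-2)}$). The genuine gap is in step 3's claim that this same count ``produces the interlacing $X_{(i_j)}<Y_j<X_{(i_{j+k})}$,'' which step 4 then leans on. Because the knots of $\phi$ are the data points and its even-order zeros are the $Y_j$, Rolle-type counting on $\phi$ only bounds the number of $Y$'s lying below each data point, i.e.\ it yields the lower half $X_{(j)}<Y_j$ (and $m\le n$); as you have set it up it says nothing about the upper half $Y_j<X_{(i_{j+k})}$. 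But that upper half is exactly the part of the Schoenberg--Whitney condition your rank argument needs: one-sided interlacing alone is compatible with rank deficiency (for instance, if more than $k$ of the $y_l$ lay above $X_{(n)}$, the corresponding columns of $[K(X_i,y_l)]$ would be samples of degree-$(k-1)$ polynomials and hence linearly dependent), and ``genericity of the $X_i$'' cannot close the gap, since the $y_l$ are determined by the data and are not in general position relative to them. So the step you yourself flag as the main obstacle is genuinely unresolved in your proposal.

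The paper's missing idea is a second, dual zero count applied to the difference $q=\hat f^{1}_{n}-\hat f^{2}_{n}$ of the two hypothetical MLEs itself. Since both MLEs have the same fitted values (your strict-concavity observation), $q$ is a spline with knots at the $Y_j$ --- not at the data --- which vanishes at all $n$ data points; applying the generalized Rolle lemma to $q$, whose $(k-2)$nd derivative is piecewise linear with knots $Y_j$, gives $\#\{Y\cap[0,X_{(i)})\}\ge i-k$, i.e.\ $Y_{i-k}<X_{(i)}$, which is precisely the missing upper half. Combined with $X_{(i)}<Y_i$ from the first count, Corollary 1 of Schoenberg--Whitney yields a positive determinant, hence linear independence of the columns, hence $q\equiv 0$, a contradiction. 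Note the logical structure this forces: the upper-side interlacing is derived inside the reductio (it uses $q\not\equiv 0$), and the theorem's interlacing for a subset of order statistics is recovered only after uniqueness, from linear independence of the $\Gamma(Y_j)$ together with Schoenberg--Whitney, not from the KKT count. (One could conceivably salvage your single-function route by counting on all interior intervals $[Y_{j'},Y_j]$ and running a Hall-type selection argument, but none of that machinery is in your sketch, and the paper's dual count is far simpler.) A minor further point: your $\phi^{(k-1)}$ is discontinuous at the knots, so Rolle cannot be iterated that far; stop at $\phi^{(k-2)}$, which is piecewise quadratic and continuous, as the paper does.
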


\section{Proof}
With probability $1$ we have that the order statistics are distinct:
$$
0<X_{(1)}<\dots < X_{(n)}.
$$
We assume that $X_{(l)}=+\infty$ for $l>n$.

The proof consists of several lemmas. First lemma provides a tool for counting zeroes of a function.
\begin{Lem}
\label{app-gen-Rolle}
Let $A\subseteq\Real$ be an interval. Consider a function $f\in C^{k}(A)$. Then the following inequality is true for the number of zeroes $N(f,A)$ of $f$ counted with multiplicities:
$$
N(f^{(k)}, A)\ge N(f,A) - k.
$$
Here the multiplicity of zero $x$ of the function $f\in C^{k}(A)$, $k\ge 0$, $x\in A$ is an integer $n\le k+1$ such that:
$$
f(y) = \dots = f^{(n-1)}(y)=0
$$
and either $n=k+1$ or $f^{(n)}(y)\neq 0$.
\end{Lem}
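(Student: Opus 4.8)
The plan is to prove this generalized Rolle's theorem by induction on $k$. The base case $k=0$ is essentially trivial: it asserts $N(f,A)\ge N(f,A)-0$, which holds with equality. The real content begins at $k=1$, where I must show $N(f',A)\ge N(f,A)-1$. This is the classical Rolle's theorem counted with multiplicities: if $f$ has $N$ zeroes counted with multiplicity in $A$, then $f'$ has at least $N-1$. I would establish this carefully by accounting for two sources of zeroes of $f'$: first, each zero of $f$ of multiplicity $n\ge 2$ is automatically a zero of $f'$ of multiplicity $n-1$ (here the multiplicity convention, capped at $k+1=2$, must be handled); second, between any two consecutive distinct zeroes of $f$, the classical Rolle theorem produces at least one additional zero of $f'$ in the open interval. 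A clean bookkeeping argument summing these contributions yields the bound $N(f',A)\ge N(f,A)-1$.

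For the inductive step, I would assume the statement holds for $k-1$ applied to any $C^{k-1}$ function, and derive it for $k$. The key is to apply the $k=1$ case to $f$ (viewed as a $C^1$ function) to get $N(f',A)\ge N(f,A)-1$, and then apply the induction hypothesis to $f'\in C^{k-1}(A)$ to get $N((f')^{(k-1)},A)\ge N(f',A)-(k-1)$. Chaining these two inequalities gives
$$
N(f^{(k)},A)=N\bigl((f')^{(k-1)},A\bigr)\ge N(f',A)-(k-1)\ge \bigl(N(f,A)-1\bigr)-(k-1)=N(f,A)-k,
$$
which is exactly the desired bound. The composition of the two steps is the structural heart of the argument.

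The main obstacle, and the point requiring the most care, is the bookkeeping of multiplicities under the capping convention stated in the lemma: a zero of $f$ has multiplicity at most $k+1$ by definition, so when passing to $f'$ the relevant cap changes from $k+1$ to $k$. I must check that a zero of $f$ counted with multiplicity $n$ (where $2\le n\le k+1$) contributes a zero of $f'$ of multiplicity at least $\min(n-1,k)=n-1$ under the $C^{k-1}$ convention, so no multiplicity is lost in the transition. The edge case where $f$ has a zero of the maximal multiplicity $k+1$ needs separate attention, since then only $f^{(k)}$ is guaranteed nonzero and the intermediate derivatives vanish; I would verify the inequality still holds by direct counting. Once the multiplicity accounting in the $k=1$ case is pinned down consistently with the capping convention, the induction proceeds smoothly and the rest is routine.
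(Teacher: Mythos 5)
Your proposal is correct and follows essentially the same route as the paper: a single Rolle step with multiplicity bookkeeping (each multiplicity-$n$ zero of $f$ yields a multiplicity-$(n-1)$ zero of $f'$, plus one new zero between consecutive distinct zeroes), giving $N(f',A)\ge N(f,A)-1$, which the paper then "applies $k$ times" — exactly your induction. Your extra care with the capping convention (checking that the one-step inequality holds with multiplicities counted under the $C^{k}$ convention for $f$ and the $C^{k-1}$ convention for $f'$, rather than literally reusing the $k=1$ statement) is a point the paper glosses over, but it is the same argument.
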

\begin{proof}
Let $x_{i}$, $1\le i \le m$ be zeroes of $f$ on $A$ and $n_{i}$ be its multiplicities. Then by Rolle's theorem the derivative $f'\in C^{k-1}(A)$ has $m-1$ zeroes between points $x_{i}$ with multiplicities at least $1$ and also zeroes $x_{i}$ with multiplicities $n_{i}-1$. Thus:
$$
N(f',A)\ge m-1 + \sum_{i=1}^{m} (n_{i}-1) = -1 + \sum_{i=1}^{m} n_{i} = N(f,A) - 1.
$$
Applying this inequality $k$ times we obtain the result.
\end{proof}

Proof of the second lemma builds the necessary tools for the main result. Note, that discreteness of the mixing measure $\hat G_{n}$ actually follows from general results about the MLE for mixture models proved in \cite{MR684866}. We repeat a part of that argument to make our proof self-contained and also to provide references to the classic convex analysis textbook \cite{MR0274683} for an interested reader.
\begin{Lem}
\label{lem-unique-supp}
There exists a set $Y$ of points $Y_{1}<\dots <Y_{m}$, $m\le n$ such that $k-$monotone density $\hat f_{n}$ defined by a mixing probability measure $\hat G_{n}$ is the MLE if and only if $\supp(\hat G_{n})\subseteq Y$. In other words, any MLE
has the form:
\begin{eqnarray}
\label{mle-repres}
\hat f_{n}(x) = \sum_{j=1}^{m} a_{j}\frac{k(Y_{j}-x)_{+}^{k-1}}{Y_{j}^{k}},
\end{eqnarray}
where $a_{j}\ge 0$, $\sum a_{j}=1$ and $Y_{j}> X_{(j)}$. Moreover, the vector $(\hat f_{n}(X_{(i)}))_{i=1}^{n}$ is unique. 
\end{Lem}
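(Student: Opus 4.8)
The plan is to recast the problem as a finite-dimensional convex program in the space of attainable likelihood vectors, following the geometric viewpoint of \cite{MR684866}, and then to use the zero-counting Lemma~\ref{app-gen-Rolle} to control the support set. Write $\psi_{i}(y)=k(y-X_{(i)})_{+}^{k-1}/y^{k}$ for the contribution of the $i$-th order statistic to an atom at $y$, and let $\Gamma=\{(\psi_{1}(y),\dots,\psi_{n}(y)):y>0\}\subseteq\RealP^{n}$ be the resulting curve. The set of vectors $(f(X_{(1)}),\dots,f(X_{(n)}))$ attainable by a mixture equals $\mathcal{S}=\conv(\Gamma\cup\{0\})$. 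First I would check that $\mathcal{S}$ is compact and convex: each $\psi_{i}$ is bounded and continuous with $\psi_{i}(y)\to 0$ as $y\to 0^{+}$ and as $y\to+\infty$, so $\Gamma\cup\{0\}$ is compact and its convex hull is compact. The log-likelihood $\ell(v)=\sum_{i}\log v_{i}$ is strictly concave on $\RealP^{n}$ and tends to $-\infty$ at the boundary, so it attains its maximum over $\mathcal{S}$ at a unique point $\hat v$ with all coordinates positive; this already yields the final claim that the vector $(\hat f_{n}(X_{(i)}))_{i}$ is unique (see \cite{MR0274683} for the convex-analytic facts on strictly concave maximization).

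Next I would extract the support characterization from the first-order conditions. Since $\hat v$ maximizes $\ell$ over $\mathcal{S}$ and $\langle\nabla\ell(\hat v),\hat v\rangle=n$, the supporting-hyperplane inequality $\langle\nabla\ell(\hat v),v-\hat v\rangle\le 0$ for all $v\in\mathcal{S}$ specializes, on the curve $\Gamma$, to the gradient condition
$$
D(y):=\sum_{i=1}^{n}\frac{\psi_{i}(y)}{\hat v_{i}}\le n\quad\text{for all }y>0,\qquad \int D\,d\hat G_{n}=n .
$$
Hence $D=n$ holds $\hat G_{n}$-almost everywhere, so every MLE is supported in $Y:=\{y>0:D(y)=n\}$; conversely, any probability measure supported in $Y$ that reproduces $\hat v$ satisfies the same optimality inequality and is therefore again an MLE, so $Y$ is exactly the set of admissible support points. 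Because $\ell$ is coordinatewise increasing, $\hat v$ lies on the boundary of $\mathcal{S}$, so Carathéodory's theorem inside the supporting hyperplane expresses $\hat v$ as a convex combination of at most $n$ points of $\Gamma$, giving representation (\ref{mle-repres}) with $a_{j}\ge 0$ and $\sum_{j}a_{j}=1$.

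It remains to show that $Y$ is finite with $|Y|\le n$ and to locate its points. Clearing denominators, the zeros of $D-n$ on $\RealP$ coincide (with multiplicity) with those of the piecewise polynomial
$$
h(y)=y^{k}\bigl(D(y)-n\bigr)=\sum_{i=1}^{n}\frac{k}{\hat v_{i}}(y-X_{(i)})_{+}^{k-1}-n\,y^{k},
$$
which is of class $C^{k-2}$, has degree $k$ on each interval between consecutive order statistics, and satisfies $h\le 0$ everywhere with $h=-n y^{k}<0$ on $(0,X_{(1)})$. Because $h\le 0$, each point of $Y$ is a point of tangency of $h$ with the axis, hence (where $h$ is smooth) a zero of multiplicity at least $2$. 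Applying Lemma~\ref{app-gen-Rolle} together with this sign constraint—and noting that on $(0,X_{(j)})$ only the first $j-1$ kernels are active, so there $h$ reduces to $\sum_{i<j}\frac{k}{\hat v_{i}}(y-X_{(i)})^{k-1}_{+}-n y^{k}$—is intended to bound the number of tangencies in $(0,X_{(j)}]$ by $j-1$ and the total number by $n$, yielding $|Y|\le n$ and the interlacing $Y_{j}>X_{(j)}$. Finally, $D(y)\to 0$ as $y\to+\infty$ forces the largest atom to satisfy $Y_{m}>X_{(n)}$ and rules out escape of mass to infinity.

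The convex-analytic steps above are standard; the genuine obstacle is the last one. Since the kernel is not strictly totally positive (\cite{MR0053177}), the classical sign-regularity and variation-diminishing arguments are unavailable, which is precisely why the Rolle-type Lemma~\ref{app-gen-Rolle} is needed. The technical difficulty is that $h$ is only $C^{k-2}$ and fails to be $C^{k}$ at the data points, so the lemma cannot be applied in one stroke on all of $\RealP$; I expect the hardest bookkeeping to be combining the per-interval derivative bounds with the global sign condition $h\le 0$ and correctly accounting for the multiplicities and jumps of $h^{(k-1)}$ at the knots $X_{(i)}$, so as to convert the even-order tangencies into the sharp count $|Y|\le n$ and the interlacing $Y_{j}>X_{(j)}$.
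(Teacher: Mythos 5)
Your convex-analytic half is sound and is essentially the paper's own argument: the paper works with the same curve $\Gamma$, forms the compact convex set $\conv(\Gamma)$, maximizes the strictly concave likelihood to get a unique fitted vector $b=\hat v>0$, and characterizes the admissible support set $Y$ through the supporting hyperplane $\sum_{i}z_{i}/b_{i}=n$ at $b$. Your gradient inequality $D(y)\le n$ together with $\int D\,d\hat G_{n}=n$ is exactly that hyperplane condition rewritten (note $p(y)=y^{k}\bigl(n-D(y)\bigr)/n$ is the paper's function, i.e.\ $h=-np$), and it is a legitimate substitute for the paper's appeal to normal cones and exposed faces (Theorems 27.4, 18.1, 18.3 of \cite{MR0274683}); your qualified converse (measures on $Y$ reproducing $\hat v$) is at least as careful as the paper's statement.

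The genuine gap is the second half, which is the actual content of the lemma: finiteness of $Y$, the bound $m\le n$, and the interlacing $Y_{j}>X_{(j)}$ are never proved — your text says the Rolle argument ``is intended to'' give the count and defers the bookkeeping. Moreover, you misdiagnose the obstacle: no analysis of $h^{(k-1)}$, of its jumps at the knots $X_{(i)}$, or any piecewise application of Lemma~\ref{app-gen-Rolle} is needed. The function $p$ is globally $C^{k-2}$ on $\RealP$, so Lemma~\ref{app-gen-Rolle} applies in one stroke on $[0,X_{(j)}]$ with $k-2$ (not $k$) differentiations:
$$
N(p^{(k-2)},[0,X_{(j)}])\;\ge\; N(p,[0,X_{(j)}])-(k-2)\;\ge\; 2\#\bigl(Y\cap[0,X_{(j)}]\bigr)+1,
$$
where the second inequality uses that $p$ has a zero of multiplicity $k-1$ at the origin (a term your sketch drops; without it the count is too weak to yield the interlacing) and a zero of multiplicity at least $2$ at each point of $Y$, these being interior minima of $p\ge 0$. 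The idea you are missing is the opposing \emph{upper} bound: stopping at the $(k-2)$-nd derivative makes $p^{(k-2)}$ piecewise quadratic — equal to $\tfrac{k!}{2}y^{2}$ on $[0,X_{(1)})$ and a quadratic polynomial on each $[X_{(i)},X_{(i+1)}]$ — so it has at most $1+2(j-1)$ zeros on $[0,X_{(j)}]$. Comparing the two bounds gives $\#\bigl(Y\cap[0,X_{(j)}]\bigr)\le j-1$, which simultaneously yields $Y_{j}>X_{(j)}$ and, using the convention $X_{(n+1)}=+\infty$, the cardinality bound $m\le n$ and hence the representation \eqref{mle-repres}. In short, the resolution is not delicate knot-by-knot accounting but the observation that order $k-2$ is exactly the sweet spot: smooth enough for the global Rolle count, yet low enough that the resulting derivative is piecewise quadratic and so has trivially few zeros.
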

\begin{proof} Following \cite{MR684866,Lindsay:95} we define a curve $\Gamma$ parameterized by $y\in\RealP$ as:
$$
\Gamma(y) = \left(\frac{1}{y}\ g_{0}\left(\frac{x}{y}\right)\right)_{i=1}^{n} = \left(\frac{k(y-X_{(i)})_{+}^{k-1}}{y^{k}}\right)_{i=1}^{n}.
$$
Since $g_{0}(x/y)$ as a function of $y$ is bounded and equals zero at $0$ and $+\infty$ we have that $\Gamma$ is compact. Suppose that $f\in\mathcal{M}_{k}$ with corresponding mixing probability measure $G$. Then the vector $(f(X_{(i)}))_{i=1}^{n}$ belongs to the set $\bar\Gamma = \cl(\conv(\Gamma))$:
\begin{eqnarray*}
(f(X_{(i)}))_{i=1}^{n} = \int_{0}^{+\infty}\Gamma(y)dG(y)
\end{eqnarray*}
Since $\Gamma$ is compact it follows that $\bar\Gamma$ is also compact and $\bar\Gamma = \conv(\Gamma)$ by (\cite{MR0274683}, Theorem 17.2). Therefore, the continuous function $\prod_{i=1}^{n}z_{i}$ attains its maximum on $\bar\Gamma$, where $z_{i}$ are the coordinates in the space $\RealP^{n}$. Let us denote $S\equiv \argmax_{\bar\Gamma} \prod_{i=1}^{n}z_{i}$.
 
Since the intersection of $\Gamma$ and the interior $\ri(\RealP^{n})$ of $\RealP^{n}$ is not empty we have $S\subset \ri(\RealP^{n})$. The function $\prod_{i=1}^{n}z_{i}$ is strictly concave, therefore $S$ consists of  a single point $b=(b_{i})_{i=1}^{n}>0$. Therefore for any MLE $\hat f_{n}$ it follows that the vector $(\hat f_{n}(X_{(i)}))_{i=1}^{n}$ is equal to $b$ and unique. The gradient of $\prod_{i=1}^{n}z_{i}$ at $b$ is proportional to $1/b\equiv(1/b_{i})_{i=1}^{n}$. 

We have $\dim(\bar \Gamma) = n$. Indeed if we consider $n$ points $t_{i}\in (X_{(i)}, X_{(i+1)})$ then the vectors $\Gamma(t_{i})$ are linearly independent (\cite{MR0053177} or direct observation).
By (\cite{MR0274683}, Theorem 27.4) the vector $1/b$ belongs to the normal cone of $\bar\Gamma$ at $b$. Since $1/b>0$ we have  $b\in \partial \bar\Gamma$ and the plane $\alpha$ defined  by the equation $\sum_{i=1}^{n} z_{i}/b_{i} = n$ is a support plane of $\bar\Gamma$ at $b$. Thus for $v_{i} = k/nb_{i}$ we have:
$$
p(y) \equiv y^{k} - \sum_{i=1}^{n} v_{i} (y-X_{(i)})_{+}^{k-1}\ge 0,
$$
for all $y\ge 0$ and $p(y)=0$ if and only if $y=0$ or  $\Gamma(y)\in\alpha$. Let us denote by $Y$ the set of $y$ such that $\Gamma(y)\in\alpha$ i.e. $\Gamma(Y)=\alpha\cap\Gamma$.

The intersection $\alpha\cap\bar\Gamma$ is an {\it exposed face} (\cite{MR0274683}, p.162) of $\bar\Gamma$. By Theorem 18.3 from \cite{MR0274683} we have $\alpha\cap\bar\Gamma = \conv(\Gamma(Y))$ and by  Theorem 18.1 we have $\supp(\hat G_{n})\subseteq Y$.

The function $p(y)$ belongs to $C^{(k-2)}(\RealP)$; at $y=0$ it has a zero of multiplicity $k-1$ and at each $y\in Y$ it has a zero of multiplicity greater or equal than $2$. By Lemma~\ref{app-gen-Rolle} we have:
$$
N(p^{(k-2)},[0,X_{(j)}]) \ge N(p,[0,X_{(j)}]) - (k-2) \ge 2\#(Y\cap [0,X_{(j)}]) + 1.
$$
The function $p^{(k-2)}(y)$ is piecewise quadratic and therefore on each interval $[X_{(i)},X_{(i+1)}]$ it may have at most two zeroes and on $[0,X_{(1)})$ it has only one zero at zero. Thus:
$$
1+2(j-1) \ge N(p^{(k-2)},[0,X_{(j)}]) \ge 2\#(Y\cap [0,X_{(j)}]) + 1
$$
or equivalently the set $Y$ has no more than $j-1$ points on $[0,X_{(j)}]$. In particular, the set $Y$ contains no more than $n$ points. Let $Y_{1}<\dots<Y_{m}$ be the points of $Y$. Then we have $Y_{j}>X_{(j)}$. 

This implies that for any MLE $\hat f_{n}$ the support of the corresponding mixing measure $\hat G_{n}$ is a subset of $Y$ and thus any MLE $\hat f_{n}$ has the form:
\begin{eqnarray*}
\hat f_{n}(x) = \sum_{j=1}^{m} a_{j}\frac{k(Y_{j}-x)_{+}^{k-1}}{Y_{j}^{k}},
\end{eqnarray*}
where $a_{j}\ge 0$, $\sum a_{j}=1$. 
\end{proof}
The last lemma proves uniqueness of the MLE.
\begin{Lem}
\label{lem-unique-coeff}
The discrete mixing probability measure $\hat G_{n}$ which defines an MLE is unique. 
\end{Lem}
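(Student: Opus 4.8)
The plan is to reduce the uniqueness of $\hat G_{n}$ to a single linear-algebra statement and then prove that statement. By Lemma~\ref{lem-unique-supp} every MLE is supported on the \emph{fixed} finite set $Y=\{Y_{1},\dots,Y_{m}\}$ and induces the \emph{same} value vector $(\hat f_{n}(X_{(i)}))_{i=1}^{n}=b$. Writing $\hat G_{n}=\sum_{j=1}^{m}a_{j}\delta_{Y_{j}}$ with $a_{j}\ge 0$, $\sum_{j}a_{j}=1$, the representation \eqref{mle-repres} gives $b=\sum_{j=1}^{m}a_{j}\Gamma(Y_{j})$. Hence the weight vector $a$, and therefore $\hat G_{n}$, is uniquely determined by $b$ as soon as the vectors $\Gamma(Y_{1}),\dots,\Gamma(Y_{m})\in\RealP^{n}$ are linearly independent. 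So the entire content of the lemma is the linear independence of the columns $\Gamma(Y_{j})$.

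To prove independence I would exhibit $m$ order statistics $X_{(i_{1})}<\dots<X_{(i_{m})}$ for which the square collocation submatrix $M=\bigl[(Y_{l}-X_{(i_{j})})_{+}^{k-1}\bigr]_{j,l=1}^{m}$ is nonsingular (the positive factors $k/Y_{l}^{k}$ in $\Gamma$ are irrelevant). Equivalently, setting $q(x)=\sum_{l=1}^{m}c_{l}(Y_{l}-x)_{+}^{k-1}$, I must show that $q$ cannot vanish at these $m$ points unless $c=0$; since truncated powers with distinct knots are linearly independent as functions, the independence of the full system $\{\Gamma(Y_{l})\}$ then follows. This is precisely a Schoenberg--Whitney interlacing statement for the order-$k$ spline space with knots $Y_{l}$, and it is here that the second part of Theorem~\ref{thm-unique} gets proved.

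The indices $i_{j}$ I would locate using the counting already developed. Lemma~\ref{lem-unique-supp} gives the lower interlacing $X_{(i_{j})}<Y_{j}$ directly from $Y_{j}>X_{(j)}$. For the upper bounds $Y_{j}<X_{(i_{j+k})}$ and the claim $Y_{m}>X_{(n)}$ I would run the dual count on $q$ itself, mirroring the proof of the previous lemma: $q\in C^{k-2}(\RealP)$, and its $(k-2)$-nd derivative $q^{(k-2)}(x)=C\sum_{l}c_{l}(Y_{l}-x)_{+}$ is \emph{piecewise linear} with breakpoints exactly at the $Y_{l}$, so on any interval containing $r$ of the knots $Y_{l}$ it has at most $r+1$ zeros. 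Applying Lemma~\ref{app-gen-Rolle} in the form $N(q^{(k-2)},A)\ge N(q,A)-(k-2)$ on the interval spanned by the chosen points, and comparing with this per-interval bound, forces — whenever $c\neq 0$ — more zeros of $q^{(k-2)}$ than its piecewise-linear structure allows. The contradiction simultaneously pins down the positions of the $X_{(i_{j})}$ relative to the $Y_{l}$, establishes $Y_{m}>X_{(n)}$ (so every order statistic lies in the active region $(0,Y_{m})$), and yields $c=0$.

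The main obstacle is exactly this upper/gap interlacing, i.e. the nonvanishing of $\det M$. Because the truncated-power kernel $(y-x)_{+}^{k-1}$ is totally positive but, by \cite{MR0053177}, \emph{not} strictly totally positive, $\det M$ is only guaranteed to be nonnegative and does genuinely vanish for badly placed points; ruling this out requires the exact location of the knots $Y_{l}$. The shift of size $k$ in the index $j+k$ reflects that a degree-$(k-1)$ truncated power is ``felt'' across at most $k$ consecutive data cells, so the localized Rolle count must be set up to track this gap. I expect most of the work to lie in verifying that the greedy choice of the $i_{j}$ respects this $k$-gap and that, under $X_{(i_{j})}<Y_{j}<X_{(i_{j+k})}$, the number of admissible zeros of the piecewise-linear $q^{(k-2)}$ is strictly smaller than what Rolle's inequality demands, forcing $c=0$ and hence the independence of the columns $\Gamma(Y_{l})$.
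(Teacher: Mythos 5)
Your proposal is correct and follows essentially the same route as the paper's proof: reduce uniqueness to linear independence of the vectors $\Gamma(Y_{j})$, get the lower interlacing $X_{(j)}<Y_{j}$ from Lemma~\ref{lem-unique-supp}, get the upper interlacing $Y_{j}<X_{(j+k)}$ by applying Lemma~\ref{app-gen-Rolle} to the difference of two hypothetical MLEs (which vanishes at all $n$ data points) together with the zero bound for the piecewise-linear $q^{(k-2)}$, and conclude nonsingularity of the collocation matrix from the Schoenberg--Whitney result, Corollary 1 of \cite{MR0053177}. The only differences are cosmetic: the paper simply takes the first $m$ order statistics rather than a greedy subset, and cites the determinant positivity instead of re-deriving it by further zero counting.
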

\begin{proof}
Suppose there exist two different MLEs $\hat f^{1}_{n}$ and $\hat f^{2}_{n}$. Then by Lemma~\ref{lem-unique-supp} we have:
$$
\hat f^{l}_{n}(x) = \sum_{j=1}^{m} a^{l}_{j}\frac{k(Y_{j}-x)_{+}^{k-1}}{Y_{j}^{k}}
$$
and therefore the function $q(x)$ defined by:
$$
q(x) = \hat f^{1}_{n}(x) - \hat f^{2}_{n}(x) = \sum_{j=1}^{m} s_{j}\frac{k(Y_{j}-x)_{+}^{k-1}}{Y_{j}^{k}},
$$
where $s_{j} = a^{1}_{j}-a^{2}_{j}$, admits at least $n$ zeroes $X_{(i)}$ and not all $s_{j}$ are equal to zero. Therefore, by Lemma~\ref{app-gen-Rolle}:
$$
N(q^{(k-2)},[0,X_{i}))\ge N(q,[0,X_{i})) - (k-2) \ge i-1 - (k-2).
$$
The function $q^{(k-2)}$ is piecewise linear with knots at $Y_{j}$. Therefore, $N(q^{(k-2)},[0,X_{i})) \le \#\{Y\cap[0, X_{(i)})\} + 1$ and:
$$
\#\{Y\cap[0, X_{(i)})\} \ge i - k.
$$
This implies:
$$
Y_{i-k} < X_{(i)},
$$
and by Lemma~\ref{lem-unique-supp}
$$
Y_{i-k} < X_{(i)} < Y_{i}.
$$
Now, by Corollary 1 of \cite{MR0053177} we have $\det((Y_{i}-X_{(j)})_{+}^{k})_{i,j=1}^{m}>0$. Thus the vectors $\Gamma(Y_{i})$, $1\le i\le m$ are independent and all $s_{i}=0$. This contradiction completes the proof.
\end{proof}

Note, that from Lemma~\ref{lem-unique-supp} it follows that any mixing probability measure on a finite set $Y=\{Y_{j}\}$ defines MLE. 
Lemma~\ref{lem-unique-coeff} shows that the MLE is unique and hence the vectors $\Gamma(Y_{j})$ are independent. 
In particular, there exists a subset of order statistics $X_{(i_{1})}<\dots <X_{(i_{m})}$ such that:
$$
\det\left((Y_{j}-X_{(i_{l})})_{+}^{k-1}\right)_{j,l=1}^{m}>0.
$$
Therefore, by Corollary 1 of \cite{MR0053177} we obtain $Y_{j-k}<X_{(i_{j})}<Y_{j}$.

The equality~\eqref{mle-repres} also implies that for any MLE $Y_{m}>X_{(n)}$ since otherwise the last coordinate of all vectors $\Gamma(Y_{j})$ is equal to zero and $\hat f_{n}(X_{(n)})=0$
 and this completes the proof of Theorem~\ref{thm-unique}.

\nocite{*}

\section*{Acknowledgments}
I would like to thank my advisor, Professor Jon A. Wellner, for introducing this problem to me and for helpful comments and suggestions.

\bibliographystyle{ims}
\bibliography{Uniqueness-of-k-monotone-MLE}

\end{document}